\definecolor{TITLE}{rgb}{0.0,0.0,1.0}
\definecolor{AUTHOR1}{rgb}{0.00,0.59,0.00}
\definecolor{AUTHOR2}{rgb}{0.50,0.00,1.00}
\definecolor{SECTION}{rgb}{0.50,0.00,1.00}
\definecolor{FOOTTITLE}{rgb}{0.00,0.50,0.75}
\definecolor{THM}{rgb}{0.7,0.3,0.3}
\definecolor{SEC}{rgb}{0.6,0.1,.5}
\newtheorem{theorem}{{\color{THM} Theorem}}[section]
\newtheorem{lemma}[theorem]{{\color{THM}Lemma}}
\newtheorem{proposition}[theorem]{{\color{THM}Proposition}}
\newtheorem{corollary}[theorem]{{\color{THM}Corollary}}
\theoremstyle{definition}
\newtheorem{definition}[theorem]{{\color{THM}Definition\ }}
\numberwithin{equation}{section}
\numberwithin{equation}{section}
\newcommand{\N}{\mathcal N}
\newcommand{\V}{\mathcal V}
\begin{document}
\title[More on  the Arens regularity of $B(X)$]{More on  the Arens regularity of  $B(X)$ }
\author[R. Faal]{R. Faal}
\author[H.R. Ebrahimi Vishki]{H.R. Ebrahimi Vishki}
\address{ Department of Pure Mathematics, Ferdowsi University of Mashhad, P.O. Box 1159, Mashhad 91775, IRAN.}
\email{faal.ramin@yahoo.com}
\address{ Department of Pure Mathematics and  Center of Excellence in Analysis on Algebraic Structures (CEAAS), Ferdowsi University of Mashhad, P.O. Box 1159, Mashhad 91775, IRAN.}
\email{vishki@um.ac.ir}
\subjclass[2010]{47L10; 47L50; 46H25 }
\keywords{Arens products, algebra of operators, reflexive space, super-reflexive, weak operator topology, ultrapower}
\begin{abstract}  We focus on  a question raised by M. Daws [Bull. London Math. Soc.  {36}  (2004), 493-503] concerning the Arens regularity of $B(X)$, the algebra of operators on a Banach space $X.$ In this respect, among other things, we   show that $B(X)$ is Arens regular if and only if $X$ is ultra-reflexive.
 \end{abstract}

\maketitle

\section{Introduction}\label{Intro}
The second dual $A^{**}$ of a Banach algebra $A$ can be made into a Banach algebra with two, in general different, (Arens) products, each extending the original product of $A$ \cite{A}.    A Banach algebra $A$ is said to be  Arens regular when the Arens products coincide. For example, every $C^*-$algebra is Arens regular \cite{CY}. For an explicit description of the properties of these products and the notion of Arens regularity one may consult with \cite{Da}.

For the Banach algebra $B(X)$, bounded operators on a Banach space $X$,  Daws  showed that, if $X$ is super-reflexive then  $B(X)$ is Arens regular; \cite[Theorem 1]{D}. He also conjectured  the validity of the converse. To the best of our knowledge, it seems that this has not be solved yet. It has been, however, known that the Arens regularity of $B(X)$ necessities the reflexivity of $X,$ (for a proof see \cite[Theorems 2, 3]{Y} or \cite[Theorem 2.6.23]{Da}).

 In Section 2 we provide some preliminaries related to ultrapowers and super-reflexivity. In Section 3 we prove Theorem \ref{tau} from which we derive that   the reflexivity of $X$ is equivalent to the $wo-$compactness of {\rm Ball}$(B(X)).$ This motivates to  introduce the notion of  ultra-reflexive space and compare it with the super-reflexivity. Section 4 is devoted to the main result of the paper (Theorem \ref{main}) stating that: $B(X)$ is Arens regular if and only if $X$ is ultra-reflexive.
\section{Preliminaries}\label{2}
Let $X$ be a Banach space, $I$ be an indexing set and let $\mathcal{U}$ be an ultrafilter on $I.$ We define the  ultrapower $X_\mathcal{U}$ of $X$ with respect to $\mathcal{U},$ by the quotient space
\[X_\mathcal{U}=\ell^\infty(X,I)/\N_\mathcal{U}, \]
where $\ell^\infty(X,I)$ is the Banach space
\[\ell^\infty(X,I)=\{(x_{\alpha})_{\alpha\in I}\subseteq X: \|(x_{\alpha})\|=\sup_{\alpha\in I}\|x_{\alpha}\|<\infty\},\]
and $\N_\mathcal{U}$ is the closed subspace \[\N_\mathcal{U}=\{(x_{\alpha})_{\alpha\in I}\in\ell^\infty(X,I):  \lim_{\mathcal{U}}\|x_{\alpha}\|=0\}.\]
Then the norm $\|(x_{\alpha})\|_\mathcal{U}:=\lim_\mathcal{U}\|x_{\alpha}\|$ coincides with the quotient norm.
We can identify $X$ with a closed subspace of $X_\mathcal{U}$ via the canonical isometric embedding $X\hookrightarrow X_\mathcal{U},$ sending $x\in X$ to the constant family $(x).$  Ample information  about  ultrapowers can be found in \cite{H}.

A Banach space $X$ is called super-reflexive if every  finitely presentable Banach space  in $X$ is reflexive. We recall that a Banach space $Y$ is said to be finitely representable in $X$ if each finite dimensional subspace  of $Y$ is $(1+\epsilon)-$isomorphic to some subspace of $X$, for each $\epsilon>0.$ For example, every Banach space is finitely representable in $c_0,$ and every finitely representable Banach space in $\ell^2$ is a Hilbert space. In the language of ultrapowers, it has been shown  that $Y$ is finitely representable in $X$ if and only if $Y$ is isometrically isomorphic to a subspace of $X_\mathcal{U}$ for some ultrafilter $\mathcal{U}$ on $X;$ \cite[Theorem 6.3]{H}. It follows that a Banach space  is super-reflexive if and only if all of its ultrapowers are reflexive. It has also proved that $X$ is super-reflexive if and only if $X^*$ is super-reflexive \cite{H}.

As it has been shown in \cite[Section 7]{H}, there is a canonical isometry  $J :(X^*)_\mathcal{U}\rightarrow (X_\mathcal{U})^*$ defined
by the rule \[\langle J((f_\alpha)_\mathcal{U}) , (x_\alpha )_\mathcal{U} \rangle =\lim_\mathcal{U} \langle f_\alpha , x_\alpha \rangle\qquad  ((f_\alpha )_\mathcal{U}\in (X^*)_\mathcal{U},  (x_\alpha )_\mathcal{U}\in X_\mathcal{U}),\] which  is a surjection if and only if $X_\mathcal{U}$ is reflexive (where $\mathcal U$ is countably incomplete). In particular,  when $X$ is super-reflexive then $J$ is an isometric isomorphism.

As the {\rm Ball}$(X^{**})$  is $w^*-$compact, one  can define a norm-decreasing map $\sigma : X_\mathcal{U}\rightarrow X^{**}$ by
\[ \sigma ( (x_\alpha)_\mathcal{U}) = w^*-\lim_\mathcal{U} \kappa_X (x_\alpha), \qquad ((x_\alpha)_\mathcal{U}\in X_\mathcal{U}), \]
where $\kappa_X $ is the canonical embedding of $X$ into $X^{**}$. We quote the next result from \cite{H} which will be needed in the subsequent sections.
\begin{proposition} [{\cite[Proposition 6.7]{H}}]\label{add}
Let $X$ be a Banach space. Then there exist an ultrafilter $\mathcal{U}$ and a linear isometric embedding $K : X^{**}\rightarrow X_\mathcal{U}$ such that $\sigma \circ K$ is the identity on $X^{**}$ and $K\circ\kappa_X$ is the canonical embedding of $X$ into $X_\mathcal{U}.$ Thus $K\circ\sigma$ is a norm-1 projection of $X_\mathcal{U}$ onto $K(X^{**})$.
\end{proposition}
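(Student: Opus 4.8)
The plan is to construct $K$ directly from the Principle of Local Reflexivity (PLR), which is exactly the device that converts the abstract object $X^{**}$ into finite-dimensional data sitting inside $X$ that an ultrapower can reassemble. Recall that PLR provides, for every finite-dimensional subspace $E\subseteq X^{**}$, every finite-dimensional subspace $F\subseteq X^{*}$ and every $\epsilon>0$, an injective linear map $T:E\f X$ with $\|T\|\,\|T^{-1}\|\le 1+\epsilon$, with $Tx=x$ whenever $\kappa_X(x)\in E$, and with $\langle f,T\phi\rangle=\langle\phi,f\rangle$ for all $\phi\in E$ and $f\in F$.

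First I would set $I$ to be the set of all triples $\alpha=(E_\alpha,F_\alpha,\epsilon_\alpha)$ of this form with $\epsilon_\alpha\le 1$, directed by $\alpha\le\beta$ iff $E_\alpha\subseteq E_\beta$, $F_\alpha\subseteq F_\beta$ and $\epsilon_\beta\le\epsilon_\alpha$, and let $\mathcal U$ be an ultrafilter on $I$ refining the filter generated by the up-sets $\{\beta:\beta\ge\alpha\}$; then $\lim_\mathcal U\epsilon_\alpha=0$ and, for each fixed $\phi\in X^{**}$ (resp.\ $f\in X^{*}$), the set of $\alpha$ with $\phi\in E_\alpha$ (resp.\ $f\in F_\alpha$) belongs to $\mathcal U$. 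For each $\alpha$ fix $T_\alpha:E_\alpha\f X$ as in PLR and put $x^\alpha:=T_\alpha\phi$ when $\phi\in E_\alpha$ and $x^\alpha:=0$ otherwise; since $\|x^\alpha\|\le 2\|\phi\|$ this yields a well-defined element $K\phi:=(x^\alpha)_\alpha\in X_\mathcal U$, independent of the values on the $\mathcal U$-small set where $\phi\notin E_\alpha$. Then each assertion reduces to a short $\lim_\mathcal U$ computation over a $\mathcal U$-large set of indices: $K$ is linear because $T_\alpha$ is linear on $E_\alpha$ while eventually $\phi,\psi,\phi+\psi\in E_\alpha$; $K$ is isometric because $(1+\epsilon_\alpha)^{-1}\|\phi\|\le\|T_\alpha\phi\|\le(1+\epsilon_\alpha)\|\phi\|$ eventually and $\epsilon_\alpha\to 0$ along $\mathcal U$; $K\circ\kappa_X$ is the canonical embedding because eventually $T_\alpha\kappa_X(x)=x$, so $K(\kappa_X(x))$ is the class of the constant family $(x)$; and $\sigma\circ K=\mathrm{id}_{X^{**}}$ because for every $f\in X^{*}$ one has $\langle\sigma(K\phi),f\rangle=\lim_\mathcal U\langle f,T_\alpha\phi\rangle=\langle\phi,f\rangle$, using that eventually $\phi\in E_\alpha$ and $f\in F_\alpha$. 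The final clause is then formal: $(K\sigma)^{2}=K(\sigma K)\sigma=K\sigma$, its range equals $K(X^{**})$ since $\sigma$ is surjective (having $K$ as a right inverse), and $K\sigma$ has norm at most $\|K\|\,\|\sigma\|\le 1$ while restricting to the identity on $K(X^{**})$.

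The only genuine obstacle is the identity $\sigma\circ K=\mathrm{id}_{X^{**}}$: this is what forces the use of the full Principle of Local Reflexivity, and not merely the fact that $X^{**}$ is finitely representable in $X$. Finite representability alone would still embed $X^{**}$ near-isometrically into some $X_\mathcal U$, but it gives no grip on the $w^{*}$-limit defining $\sigma$, so the relation $\sigma\circ K=\mathrm{id}$ — and with it the conclusion that $K\circ\sigma$ is a norm-$1$ projection onto $K(X^{**})$ — would be lost. Everything else is routine ultrafilter bookkeeping.
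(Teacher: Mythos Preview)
Your argument is correct and is exactly the construction the paper has in mind: the proposition is quoted from Heinrich without proof, but the paragraph immediately following it specifies that $\mathcal U$ refines the order filter on the index set $I=\{(M,N,\varepsilon):M\subseteq X^{**}\ \text{finite},\ N\subseteq X^{*}\ \text{finite},\ \varepsilon>0\}$, which is (up to the cosmetic switch between finite sets and the finite-dimensional subspaces they span) precisely your index set, and the map $K$ is built from the local-reflexivity operators $T_\alpha$ just as you describe. Your closing remark that finite representability alone would not suffice for $\sigma\circ K=\mathrm{id}$ is also on point.
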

It is worthwhile mentioning that the  ultrafilter $\mathcal{U}$ used in the above proposition is countably incomplete. Indeed, $\mathcal{U}$ is the ultrafilter inducing by refining the order filter on the set \[I=\lbrace (M,N,\varepsilon) : M\subseteq X^{**} \ {\rm is\ finite},\ N\subseteq X^* \ {\rm is\ finite},\  \varepsilon>0\rbrace.\]
   Set $I_n= I_{(M, N, \frac{1}{n})}=\lbrace (M_0, N_0, \varepsilon)\in I : \  M\subseteq M_0,  N\subseteq N_0\ {\rm and\ } \varepsilon\leq \frac{1}{n}\rbrace.$  Then $I_{n+1}\subseteq I_n$ and $\cap_{n=1}^\infty I_n=\emptyset,$  so  $\mathcal{U}$ is countably incomplete.\\

There are several criterions for the Arens regularity of a Banach algebra, among which, we quote the following that will  be frequently used  in the sequel, (for a proof see \cite {Da, D}).
\begin{proposition}\label{arens}
For every Banach algebra $A$ the following assertions are equivalent.
\begin{enumerate}[\hspace{1em}\rm (1)]
\item $A$ is Arens regular.
\item For each $\lambda\in A^*$ the operator $a\mapsto\lambda\cdot a: A\longrightarrow A^*$ is weakly compact.
\item For each $\lambda\in A^*$  there exist a reflexive  space $Z$ and bounded linear maps $\phi:A\rightarrow Z$ and $\psi:A\rightarrow Z^*$ such that $\langle\lambda, ab\rangle=\langle\psi(a),\phi(b)\rangle$ for all $a, b\in A.$
\end{enumerate}
\end{proposition}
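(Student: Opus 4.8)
The plan is to run the standard chain $(1)\Leftrightarrow(2)\Leftrightarrow(3)$: the equivalence $(1)\Leftrightarrow(2)$ is the classical reformulation of Arens regularity in terms of weak compactness, obtained from Grothendieck's double-limit criterion together with the iterated-limit description of the Arens products; and $(2)\Leftrightarrow(3)$ is obtained from the Davis--Figiel--Johnson--Pe\l czy\'nski (DFJP) factorization theorem in one direction and from the elementary stability of weak compactness under factorization through a reflexive space in the other.

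For $(1)\Leftrightarrow(2)$ I would first recall that, for $\lambda\in A^*$ and $a\in A$, the functional $\lambda\cdot a\in A^*$ is given by $\langle\lambda\cdot a,b\rangle=\langle\lambda,ab\rangle$ for $b\in A$, and that $A$ is Arens regular precisely when, for every $\lambda\in A^*$ and all bounded sequences $(a_m)$, $(b_n)$ in $A$, the iterated limits $\lim_m\lim_n\langle\lambda,a_mb_n\rangle$ and $\lim_n\lim_m\langle\lambda,a_mb_n\rangle$ agree whenever both exist. Writing $T_\lambda a:=\lambda\cdot a$, the identity $\langle\lambda,a_mb_n\rangle=\langle\kappa_A(b_n),T_\lambda a_m\rangle$ shows that this symmetry of iterated limits is exactly Grothendieck's double-limit criterion for relative weak compactness of the bounded set $T_\lambda(\mathrm{Ball}(A))\subseteq A^*$, which gives $(1)\Leftrightarrow(2)$. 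The one delicate point is that Grothendieck's criterion tests $T_\lambda a_m$ against bounded sequences in $A^{**}$, whereas the Arens condition only involves sequences from $A$; the reduction to $A$ is standard (Goldstine's theorem together with an Eberlein--\v{S}mulian argument), and here I would simply cite \cite{Da, D}.

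For $(2)\Rightarrow(3)$, fix $\lambda\in A^*$ and assume $T_\lambda\colon A\to A^*$ is weakly compact. By the DFJP theorem it factors as $T_\lambda=S\circ\phi$ through a reflexive Banach space $Z$, with $\phi\colon A\to Z$ and $S\colon Z\to A^*$ bounded. Putting $\psi:=S^*\circ\kappa_A\colon A\to Z^*$ we obtain, for all $a,b\in A$,
\[\langle\lambda,ab\rangle=\langle T_\lambda a,b\rangle=\langle S\phi(a),b\rangle=\langle S^*\kappa_A(b),\phi(a)\rangle=\langle\psi(b),\phi(a)\rangle,\]
which is assertion $(3)$ after relabelling $\phi\leftrightarrow\psi$ and $Z\leftrightarrow Z^*$ and using reflexivity of $Z$ to view $\phi$ as a map into $(Z^*)^*$.

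For $(3)\Rightarrow(2)$ (whence $(3)\Rightarrow(1)$ by the first step): if $\langle\lambda,ab\rangle=\langle\psi(a),\phi(b)\rangle$ with $Z$ reflexive and $\phi\colon A\to Z$, $\psi\colon A\to Z^*$ bounded, then $\langle\psi(a),\phi(b)\rangle=\langle\phi^*\psi(a),b\rangle$ exhibits $T_\lambda=\phi^*\circ\psi$ as factoring through the reflexive space $Z^*$; since $\psi(\mathrm{Ball}(A))$ is bounded, hence relatively weakly compact in $Z^*$, and $\phi^*$ is weak-to-weak continuous, $T_\lambda$ is weakly compact, and this holds for every $\lambda$. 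The main obstacle is the double-limit step in $(1)\Leftrightarrow(2)$ — in particular the passage between test sequences in $A$ and in $A^{**}$; everything else is a direct application of the DFJP factorization theorem and of the stability of weak compactness under factorization through reflexive Banach spaces.
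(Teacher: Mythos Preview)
Your argument is correct, and there is nothing to compare it against: the paper does not prove this proposition but simply quotes it with the parenthetical ``for a proof see \cite{Da, D}''. Your approach --- Grothendieck's double-limit criterion for $(1)\Leftrightarrow(2)$ and the DFJP factorization for $(2)\Leftrightarrow(3)$ --- is exactly the standard route found in those references, and the details you give (including the relabelling in $(2)\Rightarrow(3)$ via reflexivity of $Z$) are handled correctly.
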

We remark  that, in Proposition \ref{arens}, one can choose $Z$ and $\phi$ so that $\|\lambda\|\leq\|\phi\|.$\\
\section{Weak operator compactness and reflexivity}\label{3}
Let $X$ and $Y$ be two Banach spaces and let $\tau$ be a locally convex topology on $Y$ induced by a separating family $\{p_\gamma\}_{\gamma\in\Gamma}$ of semi-norms. Then $\tau$ induces a $\tau o-$topology on $B(X,Y)$ which is induced by the family  $\{x\otimes p_\gamma\}_{x\in X, \gamma\in\Gamma}$ of semi-norms, where  $(x\otimes p_\gamma)(T)= p_\gamma(T(x)),$  $(x\in X, T\in B(X,Y)).$ We thus have  $T_\alpha\xrightarrow{\tau o} T$ if and only if $T_\alpha (x)\xrightarrow{\tau }T(x),$ for each $x\in X$. For example, in the case when $\tau=w$ is the weak topology on $Y$ then the $\tau o-$topology on $B(X,Y)$ is nothing but the weak operator topology ($wo-$topology) on $B(X,Y).$
The next result establishes a close relation between  {\rm Ball}$(Y)$ and {\rm Ball}$(B(X,Y))$.
\begin{theorem}\label{tau}
Let $X$ and $Y$ be two Banach spaces  and let $\tau$ be a locally convex topology  on $Y$  which is weaker than the norm topology and induced by  the  semi-norms $\lbrace p_\gamma \rbrace$ such that   $\| y\|\leq \sup_{\| p_\gamma \|\leq 1} |p_\gamma (y)|,$  for all $y\in Y.$ Then {\rm Ball}$(Y)$ is $\tau-$compact if and only if {\rm Ball}$(B(X,Y))$  is  $\tau o-$compact.
\end{theorem}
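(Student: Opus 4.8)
The plan is to obtain both implications from Tychonoff's theorem, once we isolate the one feature of the hypotheses that is actually used: the inequality $\|y\|\le\sup_{\|p_\gamma\|\le1}|p_\gamma(y)|$ forces $\{p_\gamma\}$ to separate the points of $Y$, so that $\tau$, and hence the induced $\tau o$-topology on $B(X,Y)$, is Hausdorff. Consequently every $\tau$-compact subset of $Y$ is $\tau$-closed, and for each $x\in X$ the set $\|x\|\,\mathrm{Ball}(Y)$, being a scalar multiple of the $\tau$-compact $\mathrm{Ball}(Y)$, is again $\tau$-compact. We also assume $X\ne\{0\}$, the remaining case being degenerate.

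Suppose first that $\mathrm{Ball}(Y)$ is $\tau$-compact. I would embed $\mathrm{Ball}(B(X,Y))$ into the product space $P:=\prod_{x\in X}\|x\|\,\mathrm{Ball}(Y)$ by $T\mapsto(Tx)_{x\in X}$. By the very definition of the $\tau o$-topology this is a homeomorphism onto its image when $P$ carries the product of the $\tau$-topologies, and $P$ is compact by Tychonoff. It then remains to see that the image is closed in $P$, which is where the argument has its only real content: a family $(y_x)_{x\in X}\in P$ lies in the image exactly when $x\mapsto y_x$ is linear, and each of the relations $y_{x+x'}=y_x+y_{x'}$ (for fixed $x,x'$) and $y_{\lambda x}=\lambda y_x$ (for fixed $x,\lambda$) cuts out a $\tau$-closed subset of $P$, being the preimage of the closed set $\{0\}$ under a continuous combination of coordinate projections with the vector-space operations of $(Y,\tau)$; the norm bound $\|y_x\|\le\|x\|$ forcing $\|T\|\le1$ is automatic from membership in $P$. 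Hence the image is a closed subset of a compact space, and $\mathrm{Ball}(B(X,Y))$ is $\tau o$-compact.

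For the converse, suppose $\mathrm{Ball}(B(X,Y))$ is $\tau o$-compact, fix $f\in X^*$ with $\|f\|=1$ and $x_0\in X$ with $f(x_0)\ne0$, and consider the rank-one-type embedding $\Phi\colon Y\to B(X,Y)$, $\Phi(y)=f\otimes y$, i.e.\ $\Phi(y)(x)=f(x)y$. Then $\Phi$ is a linear isometry carrying $\mathrm{Ball}(Y)$ into $\mathrm{Ball}(B(X,Y))$; it is $\tau$-to-$\tau o$ continuous since $p_\gamma(\Phi(y)(x))=|f(x)|\,p_\gamma(y)$, and its inverse on $\Phi(Y)$ is $\tau o$-to-$\tau$ continuous by evaluation at $x_0$ followed by rescaling by $f(x_0)^{-1}$. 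Moreover $\Phi(Y)$ is $\tau o$-closed in $B(X,Y)$: the map $P_0(T):=f\otimes(f(x_0)^{-1}T(x_0))$ is a bounded, $\tau o$-continuous idempotent with range $\Phi(Y)$, so $\Phi(Y)=\ker(\mathrm{id}-P_0)$ is $\tau o$-closed; therefore $\Phi(\mathrm{Ball}(Y))=\Phi(Y)\cap\mathrm{Ball}(B(X,Y))$ is a closed, hence $\tau o$-compact, subset of $\mathrm{Ball}(B(X,Y))$, and the homeomorphism $\Phi$ carries this compactness back to $\mathrm{Ball}(Y)$. Equivalently, one may run this last step with nets: a net in $\mathrm{Ball}(Y)$ is sent into the $\tau o$-compact $\mathrm{Ball}(B(X,Y))$, and a $\tau o$-convergent subnet $f\otimes y_\beta\to T$ forces $y_\beta=f(x_0)^{-1}(f\otimes y_\beta)(x_0)\xrightarrow{\tau}f(x_0)^{-1}T(x_0)\in\mathrm{Ball}(Y)$.

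I expect the main obstacle to be the closedness step of the first implication, namely checking that $\tau$-limits of families $(Tx)_{x\in X}$ still satisfy the linearity relations defining elements of $\mathrm{Ball}(B(X,Y))$; it is precisely there that the Hausdorffness provided by the norming hypothesis is indispensable, the norm constraint being comparatively harmless.
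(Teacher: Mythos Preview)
Your proof is correct and follows the same two-step plan as the paper: embed $\mathrm{Ball}(B(X,Y))$ into a Tychonoff product of balls of $Y$ for one direction, and use the rank-one isometry $y\mapsto f\otimes y$ for the other. The execution differs in one instructive respect. The paper indexes its product over the unit sphere $S_X$ with each factor equal to $\mathrm{Ball}(Y)$, passes to a coordinatewise $\tau$-convergent subnet, defines $T(x)=\tau\text{-}\lim_\beta T_{\alpha_\beta}(x)$, and then invokes the norming inequality $\|y\|\le\sup_{\|p_\gamma\|\le1}|p_\gamma(y)|$ explicitly to obtain $\|T(x)\|\le\liminf_\beta\|T_{\alpha_\beta}(x)\|\le1$. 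You instead index over all of $X$ with factors $\|x\|\,\mathrm{Ball}(Y)$, so the bound $\|T\|\le1$ is built into membership in the product, and you use the norming hypothesis only to guarantee that $\tau$ (hence $\tau o$) is Hausdorff, which makes the linearity relations cut out a closed subset. Your packaging is a bit cleaner and isolates precisely where the hypothesis is needed; the paper's version has the complementary virtue of exhibiting directly the lower-semicontinuity estimate $\|T(x)\|\le\liminf\|T_{\alpha_\beta}(x)\|$ for $\tau$-limits. For the converse implication your projection argument showing $\Phi(Y)=\ker(\mathrm{id}-P_0)$ is $\tau o$-closed is a nice addition; the paper's net argument is terser and leaves the reader to check that the limit lands back in $\mathrm{Ball}(Y)$.
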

\begin{proof}
Let {\rm Ball}$(B(X,Y))$ is $ \tau o-$compact and fix a $f\in X^*$ with $\|f\|=1$  and $f(x_0)=1$, for some $x_0\in X$. Then the  operator $\Psi:Y\rightarrow B(X,Y)$ where $\Psi(y)(x)=f(x)y$ is an isometry. Moreover $y_\alpha\xrightarrow{\tau} y$ if and only if $\Psi(y_\alpha)\xrightarrow{\tau o}\Psi(y)$. If $\{y_\alpha\}$ is  a net in {\rm Ball}$(Y)$ then $\{\Psi(y_\alpha)\}$ is a net in {\rm Ball}$(B (X,Y)),$  so it has a $\tau o-$convergent subnet say $\{\Psi(y_{\alpha_\beta})\}$. Therefore  $\{y_{\alpha_\beta}\}$ is $\tau-$convergent in $Y$, that is,  {\rm Ball}$(Y)$ is $\tau-$compact.

For the converse we define the operator $\Phi:B(X,Y)\rightarrow \Pi_{x\in S_X} Y$ with the rule $\Phi(T)=(Tx)_{x\in S_X}$. Obviously $\Phi$ is one to one, and  $T_\alpha\xrightarrow{\tau o}T$ if and only if $\Phi(T_\alpha)\xrightarrow{\Pi \tau}\Phi(T)$.
Let $\{T_\alpha\}$ be a net in {\rm Ball}$(B(X,Y))$ then  $\{\Phi(T_\alpha)\}$ is a net  in $\Pi_{x\in S_X} \texttt{\rm Ball}(Y)$. By Tychonoff  theorem $\Pi_{x\in S_X} \texttt{\rm Ball}(Y)$ is compact in the  product $\tau-$topology, so $\{\Phi(T_\alpha)\}$ enjoys a subnet $\{\Phi(T_{\alpha_\beta})\}$ which is  convergent in the product $\tau-$topology. We then can  define  an operator $T:X\to X$ by $T(x)=\tau-\lim_\beta T_{\alpha_\beta}(x).$  For each  $x\in S_X,$ since
$|p_\gamma (T_{\alpha_\beta}(x))|\leq \| p_\gamma\| \ \|T_{\alpha_\beta}(x)\|,$
so $
|p_\gamma (T(x))|\leq \| p_\gamma\| \liminf \|T_{\alpha_\beta}(x)\|.$ It follows that $
\|T(x) \|\leq \sup_{\| p_\gamma \|\leq 1} |p_\gamma (T(x))|\leq \liminf \|T_{\alpha_\beta}(x)\|\leq 1$. We thus have $\|T\|\leq 1$ and $T_{\alpha_\beta}\xrightarrow{\tau o}T,$ as claimed.
\end{proof}
As an immediate consequence  we present the next result, part (1) of  which will be frequently used in the sequel.
\begin{proposition}\label{wo}
Let $X$ be a Banach space. Then
\begin{enumerate}[\hspace{1em}\rm (1)]
\item {\rm Ball}$(B(X))$ is $wo-$compact if and only if  $X$ is reflexive.
\item {\rm Ball}$(B(X))$ is $so-$compact if and only if  $X$ is finite dimensional.
\item  {\rm Ball}$(X^*)$ is  $w^*-$compact (Banach-Alaoglu).
\end{enumerate}
\end{proposition}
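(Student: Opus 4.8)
The plan is to read off each of the three equivalences as a special case of Theorem \ref{tau}, feeding it an appropriate pair $Y$, $\tau$ and then invoking a classical description of the relevant compactness of the closed unit ball of a Banach space; I would take $Y=X$ for (1) and (2), and $Y$ equal to the scalar field for (3).

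For (1) I would use $\tau=w$, the weak topology on $Y=X$, which is induced by the seminorms $p_f(y)=|f(y)|$, $f\in X^*$. Here $\|p_f\|=\sup_{\|y\|\le 1}|f(y)|=\|f\|$, so by the Hahn--Banach theorem $\sup_{\|p_\gamma\|\le 1}|p_\gamma(y)|=\sup_{\|f\|\le 1}|f(y)|=\|y\|$ for every $y\in X$, and the hypotheses of Theorem \ref{tau} hold. As noted in the text preceding the theorem, the $\tau o$-topology attached to $\tau=w$ is exactly the weak operator topology on $B(X)=B(X,X)$; hence Theorem \ref{tau} gives that {\rm Ball}$(B(X))$ is $wo$-compact if and only if {\rm Ball}$(X)$ is weakly compact, and I would finish by quoting Kakutani's characterization of reflexivity (via Goldstine's theorem and Banach--Alaoglu): {\rm Ball}$(X)$ is weakly compact precisely when $X$ is reflexive.

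For (2) I would take $\tau$ to be the norm topology of $Y=X$ itself; it is weaker than (indeed equal to) the norm topology and is induced by the single seminorm $p=\|\cdot\|$, for which $\|p\|=1$ and hence $\sup_{\|p_\gamma\|\le 1}|p_\gamma(y)|=\|y\|$. The associated $\tau o$-topology on $B(X)$ is the strong operator topology, so Theorem \ref{tau} yields that {\rm Ball}$(B(X))$ is $so$-compact if and only if {\rm Ball}$(X)$ is norm-compact, which by the Riesz lemma means $\dim X<\infty$. For (3) I would take $Y=\mathbb{C}$ (the scalar field) with $\tau$ its norm topology, induced by the single seminorm $|\cdot|$; the required inequality is again immediate and {\rm Ball}$(\mathbb{C})$ is compact. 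The $\tau o$-topology on $B(X,\mathbb{C})=X^*$ is generated by the seminorms $f\mapsto|f(x)|$, $x\in X$, i.e. it is the $w^*$-topology, so Theorem \ref{tau} immediately gives that {\rm Ball}$(X^*)$ is $w^*$-compact, recovering Banach--Alaoglu.

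I do not anticipate any genuine obstacle: the only thing to verify in each case is the mild seminorm condition of Theorem \ref{tau}, which reduces to Hahn--Banach for the weak and weak-$*$ topologies and is trivial for the norm topology; everything else is a citation of standard facts about unit balls. (If $X=\{0\}$ all three statements are trivial, so one may assume $X\neq\{0\}$ when applying Theorem \ref{tau}.)
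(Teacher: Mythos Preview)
Your proposal is correct and follows exactly the paper's approach: apply Theorem~\ref{tau} with $Y=X$ and $\tau$ the weak (resp.\ norm) topology for (1) (resp.\ (2)), and with $Y=\mathbb{C}$ and its usual topology for (3). You have simply made explicit the verification of the seminorm hypothesis and the classical identifications (Kakutani, Riesz) that the paper leaves implicit.
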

\begin{proof}
For (1) (resp. (2)) we use Theorem \ref{tau} for $Y=X$ with $\tau$ as the weak (resp. norm) topology.  For (3) we use Theorem \ref{tau} for $Y=\mathbb C$ with $\tau$ as the usual topology.
\end{proof}
\section{Arens regularity of $B(X)$ and ultra-reflexivity}\label{4}
We commence with the   next key lemma that will be frequently used in the sequel.
\begin{lemma}\label{1}
If $X$ is reflexive then there exist an (countably incomplete) ultrafilter $\mathcal{U}$ such that every $\lambda\in B(X)^*$ can be identified with $x_\mathcal{U}{\otimes}f_\mathcal{U}$ for some $x_\mathcal{U}\in \ell^2(X)_\mathcal{U}, f_\mathcal{U}\in\ell^2(X^*)_\mathcal{U}.$
\end{lemma}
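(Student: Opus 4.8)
The plan is to realise an arbitrary $\lambda\in B(X)^*$, which we may assume satisfies $\|\lambda\|\le 1$, as a weak-$*$ limit of elementary finite-rank functionals, and then to gather all of these approximants into one ultrapower; the general $\lambda$ then follows by a final rescaling. The first ingredient is a density statement: for finite families $x_1,\dots,x_n\in X$ and $g_1,\dots,g_n\in X^*$ consider the functional $T\mapsto\sum_{i=1}^n\langle g_i,Tx_i\rangle$ on $B(X)$, and let $\mathcal B\subseteq B(X)^*$ be the (absolutely convex) set of all such functionals for which $\sum_{i=1}^n\|x_i\|\,\|g_i\|\le 1$. A one-line Hahn--Banach argument gives $\sup\{|\langle g,Tx\rangle|:\|x\|\le 1,\ \|g\|\le 1\}=\|T\|$, so the polar of $\mathcal B$ inside $B(X)$ is exactly ${\rm Ball}(B(X))$, whence by the bipolar theorem the weak-$*$ closure of $\mathcal B$ in $B(X)^*$ is all of ${\rm Ball}(B(X)^*)$. (Conceptually this reflects the $wo$-compactness of ${\rm Ball}(B(X))$ from Proposition \ref{wo}(1): for reflexive $X$ it identifies $B(X)$ with the dual of the projective tensor product of $X$ and $X^*$, turning the density into an instance of Goldstine's theorem.) In particular $\lambda$ lies in the weak-$*$ closure of $\mathcal B$.

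Next I would convert the $\ell^1$-type normalisation of such approximants into an $\ell^2$-type one. If $\mu\in\mathcal B$ has $\mu(T)=\sum_{i=1}^n\langle g_i,Tx_i\rangle$, then replacing $x_i$ by $c_ix_i$ and $g_i$ by $c_i^{-1}g_i$ with $c_i=\sqrt{\|g_i\|/\|x_i\|}$ (discarding zero terms) leaves $\mu$ unchanged but makes $\|x_i\|=\|g_i\|$ for all $i$, so that $\sum_i\|x_i\|^2=\sum_i\|g_i\|^2\le 1$ and hence $(x_i)_{i=1}^n\in{\rm Ball}(\ell^2(X))$, $(g_i)_{i=1}^n\in{\rm Ball}(\ell^2(X^*))$, with $|\mu(T)|\le\|T\|$. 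This $\ell^1\!\to\!\ell^2$ passage is precisely what dictates the target spaces $\ell^2(X)_{\mathcal U}$ and $\ell^2(X^*)_{\mathcal U}$.

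The crux of the proof is to pin down one ultrafilter that serves every $\lambda$ simultaneously. I would take $I=\{(F,\varepsilon):F\subseteq B(X)\text{ finite},\ \varepsilon>0\}$, directed by $(F,\varepsilon)\preceq(F',\varepsilon')$ iff $F\subseteq F'$ and $\varepsilon\ge\varepsilon'$, and let $\mathcal U$ be any ultrafilter on $I$ refining the order filter. Since the sets $\{(F,\varepsilon):\varepsilon\le 1/n\}$ lie in the order filter and have empty intersection, $\mathcal U$ is automatically countably incomplete; moreover $\lim_{\mathcal U}\varepsilon=0$, and $\{(F,\varepsilon):T\in F\}\in\mathcal U$ for every fixed $T\in B(X)$. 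Given $\lambda$ with $\|\lambda\|\le 1$, for each $\alpha=(F,\varepsilon)\in I$ I choose, by the first step, some $\mu_\alpha\in\mathcal B$ with $|\mu_\alpha(T)-\lambda(T)|<\varepsilon$ for all $T\in F$, put it in the $\ell^2$-form of the second step as $\mu_\alpha(T)=\sum_i\langle g_i^\alpha,Tx_i^\alpha\rangle$, and set $x_{\mathcal U}=\bigl((x_i^\alpha)_i\bigr)_{\mathcal U}\in\ell^2(X)_{\mathcal U}$ and $f_{\mathcal U}=\bigl((g_i^\alpha)_i\bigr)_{\mathcal U}\in\ell^2(X^*)_{\mathcal U}$, both of norm at most $1$.

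It remains to identify $x_{\mathcal U}\otimes f_{\mathcal U}$ with $\lambda$. Letting $\widetilde T\in B(\ell^2(X))$ denote the coordinatewise action of $T$ and using the canonical embedding $\ell^2(X^*)_{\mathcal U}\hookrightarrow\bigl(\ell^2(X)_{\mathcal U}\bigr)^*$ (note $\ell^2(X)^*=\ell^2(X^*)$), the pairing unwinds to $\langle x_{\mathcal U}\otimes f_{\mathcal U},T\rangle=\lim_{\mathcal U}\sum_i\langle g_i^\alpha,Tx_i^\alpha\rangle=\lim_{\mathcal U}\mu_\alpha(T)$; and because for each $\delta>0$ the set $\{\alpha=(F,\varepsilon):T\in F,\ \varepsilon\le\delta\}$ belongs to $\mathcal U$ and forces $|\mu_\alpha(T)-\lambda(T)|<\delta$ on it, this equals $\lambda(T)$. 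Thus $x_{\mathcal U}\otimes f_{\mathcal U}$ and $\lambda$ agree throughout $B(X)$, and replacing $x_{\mathcal U}$ by $\|\lambda\|x_{\mathcal U}$ handles a general $\lambda$. I expect the genuine obstacle to be exactly this third step — building one countably incomplete ultrafilter fine enough to resolve every operator, every scale and every functional at once, which is what the order-filter refinement on $I$ accomplishes — whereas the bipolar/Goldstine density, the $\ell^1\!\to\!\ell^2$ rescaling and the elementary bound $|\mu_\alpha(T)|\le\|T\|$ are all routine.
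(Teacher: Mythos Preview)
Your argument is correct and, at its core, matches the paper's: both prove that every $\lambda\in\mathrm{Ball}(B(X)^*)$ is a weak-$*$ limit of finite tensors $\sum x_i\otimes g_i$ with $\sum\|x_i\|\,\|g_i\|\le 1$, then perform the $\ell^1\!\to\!\ell^2$ rescaling $\|x_i\|=\|g_i\|$, and finally capture the approximating net via an ultrafilter refining an order filter on a countably-incomplete index set. The difference is in how the weak-$*$ density is obtained. The paper invokes the identification $B(X)^*\cong(X\widehat\otimes X^*)^{**}$ (this is where reflexivity enters) and then applies Proposition~\ref{add} (Heinrich's local-reflexivity embedding $Y^{**}\hookrightarrow Y_{\mathcal U}$) as a black box; the resulting index set carries three parameters $(M,N,\varepsilon)$. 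You instead give a direct bipolar computation showing $\mathcal B^{\circ}=\mathrm{Ball}(B(X))$ and hence $\overline{\mathcal B}^{\,w^*}=\mathrm{Ball}(B(X)^*)$, and build the ultrafilter on the smaller index set $\{(F,\varepsilon):F\subseteq B(X)\text{ finite},\ \varepsilon>0\}$. Your route is more self-contained---it sidesteps both Proposition~\ref{add} and the projective-tensor identification---and, as a by-product, shows that the reflexivity hypothesis is not actually needed for the lemma itself (the paper's proof uses it only to invoke $B(X)=(X\widehat\otimes X^*)^*$). What the paper's route buys is brevity once Proposition~\ref{add} is in hand, since that proposition already packages the Goldstine/order-filter machinery.
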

\begin{proof}
The reflexivity of $X$ implies that $B(X)^*\cong(X\widehat{\otimes}X^*)^{**}. $ By Proposition \ref{add} there exist an ultrafilter $\mathcal{U}$ and a linear isometric embedding  \[(X\widehat{\otimes}X^*)^{**}\hookrightarrow (X\widehat{\otimes}X^*)_\mathcal{U},\]
such that the composition  $(X\widehat{\otimes}X^*)\hookrightarrow(X\widehat{\otimes}X^*)^{**}\hookrightarrow (X\widehat{\otimes}X^*)_\mathcal{U}$ coincides with the canonical embedding   $(X\widehat{\otimes}X^*)\hookrightarrow (X\widehat{\otimes}X^*)_\mathcal{U}.$
We consider arbitrary $\lambda\in B(X)^*\hookrightarrow (X\widehat{\otimes}X^*)_\mathcal{U}$ then $\lambda =(\lambda_\alpha )_\mathcal{U}$ which $\lambda_\alpha =\sum_{n=1}^\infty x_n^{\alpha} \otimes f_n^{\alpha}$, $\| \lambda_\alpha \| \leq \sum_{n=1}^\infty \| x_n^{\alpha} \|  \ \| f_n^{\alpha} \|\leq \| \lambda \| +1$ and $\|x_n^{\alpha} \| = \|f_n^{\alpha} \|$ for each $(\alpha,n)\in I\times \mathbb{N}$.

For each $\alpha,$ put $x^{\alpha} =(x^{\alpha}_n)_{n\in\mathbb N}$ and $f^{\alpha} =(f^{\alpha}_n)_{n\in \mathbb N},$ then $x^{\alpha}\in\ell^2(X)$ and $f^{\alpha}\in\ell^2(X^*),$ indeed
\[\| x^{\alpha} \|_2^2=\sum_{n=1}^\infty \| x^{\alpha}_n \|^2 = \sum_{n=1}^\infty \| x^{\alpha}_n \| \ \|f^{\alpha}_n \|\leq \| \lambda \|+1.\]
 Now set $x_\mathcal{U}=(x^{\alpha})_\mathcal{U}$ and $f_\mathcal{U}= (f^{\alpha})_\mathcal{U}$ then  clearly $x_\mathcal{U}\in \ell^2(X)_\mathcal{U}$ and $f_\mathcal{U}\in \ell^2(X^*)_\mathcal{U}$. Then  $\lambda =x_\mathcal{U}\otimes f_\mathcal{U}$, indeed
\[\lambda (T) =(\lambda_\alpha)_\mathcal{U}(T)=\lim_\mathcal{U}\lambda_\alpha(T) =\lim_\mathcal{U} \sum_{n=1}^\infty f_n^{\alpha}(Tx_n^{\alpha})=( (x^{\alpha})_\mathcal{U}\otimes (f^{\alpha})_\mathcal{U})(T)=(x_\mathcal{U}\otimes f_\mathcal{U})(T).\]
\end{proof}
We recall that the super-reflexivity of $X$ is equivalent to that  of  $\ell^2(X)$, the Banach space of all $2-$summable sequences in $X$, (see \cite[Proposition 4]{D}).   So $X$ is super-reflexive if and only if ${\rm Ball}(\ell^2(X)_\mathcal{U})$ is weakly compact, or equivalently, by  Proposition \ref{wo},   {\rm Ball}$(B(\ell^2(X)_\mathcal{U}))$ is  $wo-$compact for every ultrafilter $\mathcal{U}.$ This motivates to introduce  the notion of ultra-reflexivity in the next definition.
\begin{definition} A Banach space $X$ is called ultra-reflexive if ${\rm Ball}(B(X))(x_\mathcal{U})$ is weakly compact for every ultra-filter $\mathcal{U}$ and each $x_\mathcal{U}\in\ell^2(X)_\mathcal{U}.$
\end{definition}
It is obvious that every ultra-reflexive space $X$ is reflexive. Indeed, for each non-zero $x\in X$, $B(X)(x)=X.$ It is also worth to note that, if $X$ is super-reflexive then  $X$ is ultra-reflexive; see Corollary \ref{su}. Therefore ultra-reflexivity lies between reflexivity and super-reflexivity.

We are now ready to prove our main result characterizing the Arens regularity of $B(X)$ in terms of the ultra-reflexivity of $X.$ Before proceedin, we quote the next technical lemma from \cite{DFJP} which will be used in the proof of Theorem \ref{main}.

\begin{lemma} [{\cite[Lemma 1]{DFJP}}]\label{reflex}
Let $X$ be a Banach space and $W\subseteq X$ be a bounded, symmetric and convex subset. For each $n\in\mathbb N$ let  the norm $\| \cdot\|_n$ denote the gauge of $U_n=2^n W+2^{-n}{\rm Ball}(X).$  Set $Y=\lbrace x\in X : |||x|||=(\sum_{n=1}^\infty \|x\|_n^2)^{\frac{1}{2}}<\infty\rbrace,$ then
\begin{enumerate}[\hspace{1em}\rm (i)]
\item
$W\subseteq {\rm Ball}(Y)$.
\item
$(Y, |||\cdot|||)$ is a Banach space and the identity embedding $j:Y\to X$ is bounded.
\item
$j^{**}:Y^{**}\to X^{**}$ is one to one and $(j^{**})^{-1}(X)=Y$.
\item
$Y$ is reflexive if and only if $W$ is weakly relatively compact.
\end{enumerate}
\end{lemma}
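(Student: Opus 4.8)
The plan is to follow the classical Davis--Figiel--Johnson--Pe{\l}czy\'nski interpolation scheme, treating the four clauses in turn with the auxiliary $\ell^2$-sum of renormings as the central device. Write $M=\sup_{w\in W}\|w\|$; since $2^{-n}{\rm Ball}(X)\subseteq U_n\subseteq(2^nM+2^{-n}){\rm Ball}(X)$, the gauge $\|\cdot\|_n$ is a norm on $X$ equivalent to the original one, with $(2^nM+2^{-n})^{-1}\|x\|\le\|x\|_n\le 2^n\|x\|$. For (i) I would note that if $w\in W$ then $2^nw\in 2^nW\subseteq U_n$, so $\|w\|_n\le 2^{-n}$, whence $|||w|||^2\le\sum_{n\ge1}4^{-n}=\tfrac13<1$ and $W\subseteq{\rm Ball}(Y)$. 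For the boundedness of $j$ in (ii), the lower estimate at $n=1$ gives $\|x\|\le(2M+\tfrac12)|||x|||$.

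For completeness in (ii) and all of (iii) I would introduce $X_n=(X,\|\cdot\|_n)$ and the $\ell^2$-sum $Z=(\bigoplus_n X_n)_{\ell^2}$, together with the diagonal map $\delta:Y\to Z$, $\delta(x)=(x,x,\dots)$, which is an isometry by the very definition of $|||\cdot|||$. Completeness then follows by checking that $\delta(Y)$ is closed in $Z$: if $\delta(x^{(k)})\to(\zeta_n)_n$ in $Z$, coordinatewise convergence in each $\|\cdot\|_n$ forces, via equivalence of norms, a common $X$-limit $\zeta$, so the limit is $\delta(\zeta)$ with $\zeta\in Y$; since $Z$ is complete, so is $Y$.

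The heart of the argument is (iii). I would exploit that each formal identity $q_n:X_n\to X$ is an isomorphism, so $q_n^{**}$ is the identity of $X^{**}$ (as a set) and in particular injective. Factoring the inclusion as $j=q_n\circ j_n$ with $j_n:Y\to X_n$, and noting $j_n=P_n\circ\delta$ for the coordinate projection $P_n:Z\to X_n$, one gets for $\Phi\in Y^{**}$ that the $n$-th coordinate of $\delta^{**}\Phi\in Z^{**}=(\bigoplus_n X_n^{**})_{\ell^2}$ equals $j_n^{**}\Phi$, and that $j^{**}\Phi=q_n^{**}(j_n^{**}\Phi)$. Hence $j^{**}\Phi=0$ forces every coordinate $j_n^{**}\Phi$ to vanish, so $\delta^{**}\Phi=0$ and, $\delta^{**}$ being an isometric embedding, $\Phi=0$; this is the injectivity of $j^{**}$. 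The same bookkeeping yields $(j^{**})^{-1}(X)=Y$: if $j^{**}\Phi=\kappa_X(x_0)$, then $q_n^{**}$ being the identity forces the $n$-th coordinate of $\delta^{**}\Phi$ to be $\kappa_{X_n}(x_0)$, and membership of $\delta^{**}\Phi$ in $Z^{**}$ gives $\sum_n\|x_0\|_n^2<\infty$, i.e. $x_0\in Y$ and $\Phi=\kappa_Y(x_0)$. I expect this clause --- realizing $Y^{**}$ inside $Z^{**}$ and using that $q_n^{**}$ is an isomorphism --- to be the main obstacle, since the naive route (norm-density of the range of $j^*$) genuinely fails: only weak*-density is available, which does not annihilate $\ker j^{**}$.

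Finally, for (iv) the direction ``$Y$ reflexive $\Rightarrow W$ relatively weakly compact'' is immediate, because $W\subseteq{\rm Ball}(Y)$ by (i) and the weak--weak continuous map $j$ sends the weakly compact ball ${\rm Ball}(Y)$ onto a weakly compact set containing $W$. For the converse I would show that $j({\rm Ball}(Y))$ is relatively weakly compact, which by (iii) together with Goldstine's theorem upgrades to $j^{**}({\rm Ball}(Y^{**}))\subseteq\kappa_X(X)$ and hence to reflexivity of $Y$. Here the scaling in $U_n$ is used decisively: from ${\rm Ball}(Y)\subseteq\bigcap_n\{\|\cdot\|_n\le1\}$ one gets, after passing to weak*-closures in $X^{**}$, that $\overline{j({\rm Ball}(Y))}^{\,w^*}\subseteq\bigcap_n\big(2^n\overline{W}^{\,w^*}+2^{-n}{\rm Ball}(X^{**})\big)$; since $W$ is relatively weakly compact, $\overline{W}^{\,w^*}\subseteq\kappa_X(X)$, so each $\Psi$ in this intersection satisfies ${\rm dist}(\Psi,\kappa_X(X))\le 2^{-n}$ for every $n$, forcing $\Psi\in\kappa_X(X)$. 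This places $\overline{j({\rm Ball}(Y))}^{\,w^*}$ inside $X$ and completes the proof.
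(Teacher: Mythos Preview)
Your proposal is correct and is precisely the classical Davis--Figiel--Johnson--Pe{\l}czy\'nski argument. The paper itself does not give a proof of this lemma: it merely quotes the statement from \cite{DFJP} and uses it as a black box in the implication (d)$\Rightarrow$(a) of Theorem~\ref{main}, so there is nothing to compare against beyond noting that what you wrote is the standard proof from the cited source.
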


\begin{theorem}\label{main} For a Banach space $X$ the following assertions are equivalent.
\begin{enumerate}[\hspace{1em}\rm (a)]
\item
$B(X)$ is Arens regular.
\item
$f_\mathcal{U}\circ {\rm Ball}(B(X))$ is $w^*-$compact  for  every ultrafilter $\mathcal{U}$ and each $f_\mathcal{U}\in{\ell^2(X)_\mathcal{U}}^*.$
\item
$f_\mathcal{U}\circ {\rm Ball}(B(X))$ is $w-$compact  for  every ultrafilter $\mathcal{U}$ and each $f_\mathcal{U}\in{\ell^2(X)_\mathcal{U}}^*.$
\item
$X$ is ultra-reflexive.
\end{enumerate}
\end{theorem}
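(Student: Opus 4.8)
The plan is to prove the cyclic chain $(d)\Rightarrow(a)\Rightarrow(b)\Rightarrow(c)\Rightarrow(d)$, using four ingredients: Proposition \ref{arens} (the weak-compactness and reflexive-factorization reformulations of Arens regularity), Lemma \ref{1} (which, once $X$ is reflexive, realizes every element of $B(X)^*$ as an elementary tensor $x_\mathcal{U}\otimes f_\mathcal{U}$ for one fixed countably incomplete ultrafilter $\mathcal{U}$), Proposition \ref{wo}(1) (so that ${\rm Ball}(B(X))$ is $wo$-compact whenever $X$ is reflexive), and the Davis--Figiel--Johnson--Pełczyński construction of Lemma \ref{reflex} (to pass from a weakly compact set to a reflexive space through which the relevant operators factor). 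Throughout I use that (a) and (d) each force $X$ to be reflexive — for (a) this is the fact recalled in the introduction, for (d) the remark after the definition — and I recover reflexivity inside the cycle for (b) and (c) by specializing to a principal ultrafilter and collapsing to ${\rm Ball}(X^*)$ by means of a rank-one operator exactly as in the proof of Theorem \ref{tau}, so that Proposition \ref{wo} applies.

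For $(d)\Rightarrow(a)$ I fix $\lambda\in B(X)^*$ and, since $X$ is reflexive, use Lemma \ref{1} to write $\lambda(T)=\langle f_\mathcal{U},Tx_\mathcal{U}\rangle$ with $x_\mathcal{U}\in\ell^2(X)_\mathcal{U}$ and $f_\mathcal{U}\in\ell^2(X^*)_\mathcal{U}$ (paired via the canonical map $J$), where $B(X)$ acts diagonally on $\ell^2(X)_\mathcal{U}$. By ultra-reflexivity the set $W:={\rm Ball}(B(X))x_\mathcal{U}=\{Tx_\mathcal{U}:\|T\|\le1\}$ is relatively weakly compact in $\ell^2(X)_\mathcal{U}$, and it is bounded, convex and symmetric; so Lemma \ref{reflex}, applied to this $W$ inside $\ell^2(X)_\mathcal{U}$, yields a reflexive space $Z$ with $W\subseteq{\rm Ball}(Z)$ and a bounded injection $j:Z\to\ell^2(X)_\mathcal{U}$. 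Then $\phi:B(X)\to Z$, $\phi(T)=Tx_\mathcal{U}$, is a bounded operator (it carries ${\rm Ball}(B(X))$ into $W\subseteq{\rm Ball}(Z)$) and $\psi:B(X)\to Z^*$, $\psi(S):z\mapsto\langle f_\mathcal{U},S(jz)\rangle$, is bounded; since $\langle\psi(S),\phi(T)\rangle=\langle f_\mathcal{U},STx_\mathcal{U}\rangle=\lambda(ST)$, Proposition \ref{arens}(3) gives that $B(X)$ is Arens regular.

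For $(a)\Rightarrow(b)$, with $X$ reflexive, I fix $\mathcal{U}$ and $f_\mathcal{U}\in(\ell^2(X)_\mathcal{U})^*$ and show that the bounded convex set $D:=f_\mathcal{U}\circ{\rm Ball}(B(X))$ is $w^*$-closed, whence $w^*$-compact by Banach--Alaoglu. The point is that $\mathcal{T}\mapsto f_\mathcal{U}\circ\mathcal{T}$, defined on $B(X)^{**}$ by $\langle f_\mathcal{U}\circ\mathcal{T},y_\mathcal{U}\rangle=\langle\mathcal{T},\,y_\mathcal{U}\otimes f_\mathcal{U}\rangle$ with $y_\mathcal{U}\otimes f_\mathcal{U}\in B(X)^*$, is $w^*$-to-$w^*$ continuous and extends $T\mapsto f_\mathcal{U}\circ T$, so that $\overline{D}^{\,w^*}=f_\mathcal{U}\circ{\rm Ball}(B(X)^{**})$; Arens regularity, in the form of weak compactness of the multiplication operators $T\mapsto(y_\mathcal{U}\otimes f_\mathcal{U})\cdot T$ (Proposition \ref{arens}(2)), is then what collapses the $B(X)^{**}$-witnesses back to $B(X)$-witnesses after pairing against the functionals $y_\mathcal{U}\otimes f_\mathcal{U}$. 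For $(b)\Rightarrow(c)$ I re-read, via Lemma \ref{1} and the identification $\lambda\leftrightarrow x_\mathcal{U}\otimes f_\mathcal{U}$, the $w^*$-compactness of all sets $f_\mathcal{U}\circ{\rm Ball}(B(X))$ as the (two-sided) weak compactness of all multiplication operators on $B(X)$, i.e. as Arens regularity, and then promote weak$^*$ to weak compactness by factoring through the reflexive spaces of Lemma \ref{reflex}; and $(c)\Rightarrow(d)$ is the dual statement, transporting the weak compactness of $T\mapsto f_\mathcal{U}\circ T$ to that of $T\mapsto Tx_\mathcal{U}$ via Gantmacher's theorem together with the isometry $J$ (and the fact that $\{T^*:\|T\|\le1\}={\rm Ball}(B(X^*))$ since $X$ is reflexive). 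The reverse $(c)\Rightarrow(b)$ is of course immediate, the weak topology being finer than the weak$^*$ topology.

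The main obstacle, where the real work sits, is the mismatch between the single ultrafilter produced by Lemma \ref{1} and the quantification over all ultrafilters — and, worse, over all of $(\ell^2(X)_\mathcal{U})^*$ rather than just the canonical image $J(\ell^2(X^*)_\mathcal{U})$ — that appears in (b), (c), (d): one must absorb an arbitrary functional $f_\mathcal{U}$ and an arbitrary ultrafilter into the framework of Lemma \ref{1} by iterating ultrapowers and appealing to Proposition \ref{add}, and conversely check that ultra-reflexivity at the special ultrafilter already suffices. A second, concrete difficulty — already visible for $X=\ell^2$ — is that $T\mapsto f_\mathcal{U}\circ T$ is badly discontinuous from the $wo$-topology on ${\rm Ball}(B(X))$ to the $w^*$-topology on $(\ell^2(X)_\mathcal{U})^*$ (finite-rank projections converge $wo$ to the identity yet can be annihilated by a suitable $f_\mathcal{U}$), so none of these compactness assertions can be read off a direct topological argument; each of them has to be extracted from the multiplication-operator and reflexive-factorization forms of Arens regularity in Proposition \ref{arens}, together with the DFJP construction of Lemma \ref{reflex}.
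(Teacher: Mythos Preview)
Your $(d)\Rightarrow(a)$ is exactly the paper's argument and is correct. The remaining three steps, however, are only sketched, and in one place the sketch is circular rather than merely incomplete.

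For $(a)\Rightarrow(b)$ you propose to show that $D=f_\mathcal{U}\circ\mathrm{Ball}(B(X))$ is $w^*$-closed by extending $T\mapsto f_\mathcal{U}\circ T$ to a $w^*$--$w^*$ continuous map on $B(X)^{**}$ and then ``collapsing'' $B(X)^{**}$-witnesses to $B(X)$-witnesses via Arens regularity. The collapse is precisely the hard step and you do not say how to do it: given $\mathcal{T}\in\mathrm{Ball}(B(X)^{**})$, weak compactness of each individual map $T\mapsto(y_\mathcal{U}\otimes f_\mathcal{U})\cdot T$ does not by itself produce a single $T_0\in\mathrm{Ball}(B(X))$ matching $\mathcal{T}$ against \emph{every} $y_\mathcal{U}$ simultaneously. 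The paper fixes this with a concrete device: start from a net, use Proposition~\ref{wo}(1) to pass to a $wo$-convergent subnet with limit $T_0$, use Arens regularity to upgrade $(x\otimes f_\mathcal{U})\cdot T_{\alpha_\beta}\xrightarrow{w^*}(x\otimes f_\mathcal{U})\cdot T_0$ to weak convergence (for one fixed $x\in X$), and then, for each target $x_\mathcal{U}$, build a bounded functional $F_{S_\mathcal{U}}\in B(X)^{**}$ satisfying $F_{S_\mathcal{U}}((x\otimes f_\mathcal{U})\cdot T)=f_\mathcal{U}(T x_\mathcal{U})$, which transfers the weak convergence to $(f_\mathcal{U}\circ T_{\alpha_\beta})(x_\mathcal{U})\to(f_\mathcal{U}\circ T_0)(x_\mathcal{U})$. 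This $F_{S_\mathcal{U}}$ construction is the missing ingredient in your sketch.

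The serious gap is $(b)\Rightarrow(c)$. Your written argument is: read (b), via Lemma~\ref{1}, as weak compactness of all multiplication operators, i.e.\ as (a), and then ``promote'' to (c) using DFJP. But $w^*$-compactness of $f_\mathcal{U}\circ\mathrm{Ball}(B(X))$ does not translate to \emph{weak} compactness of the multiplication maps, so you have not recovered (a); and even granting (a), you only have $(a)\Rightarrow(b)$, so attempting to reach (c) through (a) is circular. The paper's $(b)\Rightarrow(c)$ is the iterated-ultrapower trick you correctly identify in your last paragraph but never actually use: embed $(\ell^2(X)_\mathcal{U})^{**}\hookrightarrow\ell^2(X)_{\mathcal{U}\times\mathcal{V}}$ via Proposition~\ref{add}, identify $f_\mathcal{U}$ with $(f_\mathcal{U})_\mathcal{V}\in(\ell^2(X)_{\mathcal{U}\times\mathcal{V}})^*$, apply (b) at level $\mathcal{U}\times\mathcal{V}$, and pull the resulting $w^*$-compactness back through these identifications to obtain $w$-compactness at level $\mathcal{U}$. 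That is the entire content of $(b)\Rightarrow(c)$; DFJP plays no role there.

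For $(c)\Rightarrow(d)$ your Gantmacher/duality idea is close in spirit to the paper's use of the $*$-anti-isomorphism $B(X)\cong B(X^*)$, but note that $J:\ell^2(X^*)_\mathcal{U}\to(\ell^2(X)_\mathcal{U})^*$ is in general not surjective, so you cannot freely identify $(\ell^2(X)_\mathcal{U})^*$ with $\ell^2(X^*)_\mathcal{U}$; the duality has to be routed through the operator-algebra anti-isomorphism and the identification $\mathrm{Ball}(B(X))(x_\mathcal{U})\cong x_\mathcal{U}\circ\mathrm{Ball}(B(X^*))$, as the paper does.
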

\begin{proof}
(a)$\Rightarrow$ (b):
Let  $B(X)$ be  Arens regular,   $\mathcal U$ be an ultrafilter, $f_\mathcal{U}\in {\ell^2(X)_\mathcal{U}}^*$ and $x\in X.$  They  induce the functional $x\otimes f_\mathcal{U}\in B(X)^*. $

Suppose that $\{T_\alpha\}$ is  a net in {\rm Ball}$(B(X)).$ As $X$ is reflexive \cite[Theorem 2.6.23]{Da}, by Proposition \ref{wo}  {\rm Ball}$(B(X))$ is $wo-$compact, so $\{T_\alpha\}$ enjoys a subnet  $\{T_{\alpha_\beta}\}$ such that $T_{\alpha_\beta}\xrightarrow{wo} T_0,$ for some $T_0\in{\rm Ball}(B(X)).$ This implies that
 \[(x\otimes f_\mathcal{U})\cdot T_{\alpha_\beta}\xrightarrow{w^*}(x\otimes f_\mathcal{U})\cdot T_0.\]
By Proposition \ref{arens}, the Arens regularity of $B(X)$ also implies the weak compactness of the operator \[T\mapsto (x\otimes f_\mathcal{U})\cdot T:B(X)\rightarrow B(X)^*,\qquad (T\in B(X)),\]
from which we get
\[(x\otimes f_\mathcal{U})\cdot T_{\alpha_\beta}\xrightarrow{w}(x\otimes f_\mathcal{U})\cdot T_0,\]
for some new subnet $\{T_{\alpha_\beta}\}$ of itself.

Fix $x_\mathcal{U}\in\ell^2(X)_\mathcal{U}$  and let $S_\mathcal{U}\in B(\ell^2(X)_\mathcal{U})$ be so $S_\mathcal{U}(x)=x_\mathcal{U}.$

Define $F_{S_\mathcal{U}}:(x\otimes f_\mathcal{U})\cdot B(X)\to \mathbb{C}$ by \[F_{S_\mathcal{U}}\left((x\otimes f_\mathcal{U})\cdot T\right)=f_\mathcal{U}(T(S(x))), \qquad (T\in B(X)).\]
Then $F_{S_U}$ is linear and bounded.
 Indeed
\begin{eqnarray*}
\| F_{S_\mathcal{U}}\| &=&\sup_{\|(x\otimes f_\mathcal{U})\cdot T  \|\leq 1} | F_{S_\mathcal{U}}\left((x\otimes f_\mathcal{U})\cdot T \right) | \\
&=&\sup_{\| f_\mathcal{U}\circ T  \|\leq 1} | f_\mathcal{U}(T\circ S_\mathcal{U}(x))| \\
&\leq& \sup_{\| f_\mathcal{U}\circ T  \|\leq 1}  \ \| f_\mathcal{U}\circ T\| \ \| x\| \  \ \| S_\mathcal{U}\| \\
&\leq& \| S_\mathcal{U}\| .
\end{eqnarray*}
So we can extend $ F_{S_\mathcal{U}}$ to an element $ F_{S_U}\in B(X)^{**}$ with the same norm. We thus get
\begin{eqnarray*}
(f_\mathcal{U}\circ T_{\alpha_\beta})(x_\mathcal{U})&=&f_\mathcal{U}(T_{\alpha_\beta}(S_\mathcal{U}(x)))\\
&=&F_{S_\mathcal{U}}((x\otimes f_\mathcal{U})\cdot T_{\alpha_\beta})\\
&\longrightarrow& F_{S_\mathcal{U}}((x\otimes f_\mathcal{U})\cdot T_0)=(f_\mathcal{U}\circ T_0)(x_\mathcal{U}).
\end{eqnarray*}
This implies that $(f_\mathcal{U}\circ T_{\alpha_\beta})\xrightarrow{w^*} (f_\mathcal{U}\circ T_0)$, so $f_\mathcal{U}\circ {\rm Ball}(B(X))$  is $w^*-$compact in ${\ell^2(X)_\mathcal{U}}^*.$

(b)$\Rightarrow$ (c):
Fix an ultrafilter $\mathcal{U}$ and ${f_\mathcal{U}\in\ell^2(X)_\mathcal{U}}^*.$ By Proposition \ref{add}, there exists an embedding ${\ell^2(X)_\mathcal{U}}^{**}\hookrightarrow \ell^2(X)_{\mathcal{U}\times\mathcal{V}}$, for some ultrafilter $\V.$ We also consider the identification $f_\mathcal{U}\mapsto(f_\mathcal{U})_\mathcal{V}: {\ell^2(X)_\mathcal{U}}^*\hookrightarrow {\ell^2(X)_{\mathcal{U}\times\mathcal{V}}}^*.$ By (b) $(f_\mathcal{U})_\mathcal{V}\circ {\rm Ball}(B(X))$ is $w^*-$compact in ${\ell^2(X)_{\mathcal{U}\times\mathcal{V}}}^*$. Regarding the above identifications imply  that
$f_\mathcal{U}\circ {\rm Ball}(B(X))$ is $w-$compact in ${\ell^2(X)_\mathcal{U}}^*$.

(c)$\Rightarrow$ (d):
 First note that since $B(X)$ is isometrically $\ast-$anti-isomorphic to $B(X^*)$, the Arens regularity of $B(X)$ implies that of $B(X^*)$. Now using (c) for $x_\mathcal{U}\circ {\rm Ball}(B(X^*))$, the identification  ${\rm Ball}(B(X))(x_\mathcal{U})\cong x_\mathcal{U}\circ {\rm Ball}(B(X^*))$ implies that  ${\rm Ball}(B(X))(x_\mathcal{U})$ is weakly compact in $\ell^2(X)_\mathcal{U}$ for each $x_\mathcal{U}\in\ell^2(X)_\mathcal{U}$.

(d)$\Rightarrow$ (a): Suppose that $X$ is ultra-reflexive, then $X$ must be reflexive, so by Lemma \ref{1}, each element  $\lambda\in B(X)^*$ has the  tensorial form $\lambda =x_\mathcal{U}\otimes f_\mathcal{U}$ for some $x_\mathcal{U}\in\ell^2(X)_\mathcal{U}, {f_\mathcal{U}\in\ell^2(X)_\mathcal{U}}^*$. To prove $B(X)$ is Arens regular, we use Proposition \ref{arens}. For this we first apply Lemma \ref{reflex} for the subset $W={\rm Ball}(B(X))(x_\mathcal{U})$ of $\ell^2(X)_\mathcal{U}.$ It induces a reflexive subspace $Y_{x_\mathcal{U}}$ of $\ell^2(X)_\mathcal{U}$ such that $W\subseteq {\rm Ball}(Y_{x_\mathcal{U}})$ and that the identity embedding $j:Y_{x_\mathcal{U}}\longrightarrow \ell^2(X)_\mathcal{U}$ is bounded. Now we define  $\phi:B(X)\rightarrow Y_{x_\mathcal{U}} $ and $\psi: B(X)\rightarrow {Y_{x_\mathcal{U}}}^*$ with the rules $\phi(T)=T(x_\mathcal{U})$ and  $\psi(T)=f_\mathcal{U}\circ T\circ j,$ respectively. Then a  direct verification reveals that  $\phi, \psi$ are bounded linear  mappings satisfying  \[\lambda(ST) =(x_\mathcal{U}\otimes f_\mathcal{U})(T)=\langle \psi(S) , \phi(T)\rangle, \quad  (S,T\in B(X)).\] So $B(X)$ is Arens regular, as required.
 \end{proof}
Since for every super-reflexive space $X$ the algebra $B(X)$ is Arens regular (see  \cite[Theorem 1]{D}), we also get the next corollary.
\begin{corollary}\label{su}
Every super-reflexive space is ultra-reflexive.
\end{corollary}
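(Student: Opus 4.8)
The plan is to read the corollary off Theorem \ref{main}, after which essentially nothing remains to be done. First I would recall Daws's theorem \cite[Theorem~1]{D}: if $X$ is super-reflexive then $B(X)$ is Arens regular, that is, assertion (a) of Theorem \ref{main} holds for such an $X$. Then, by the implication (a)$\Rightarrow$(d) of Theorem \ref{main} (which is the composite of the chain (a)$\Rightarrow$(b)$\Rightarrow$(c)$\Rightarrow$(d) established in its proof), $X$ is ultra-reflexive, which is precisely the claim.

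So there is no genuine obstacle here; it is worth explaining, though, why one does not try to prove super-reflexive $\Rightarrow$ ultra-reflexive directly, without passing through Arens regularity. Such an attempt would start from the equivalence of super-reflexivity of $X$ and of $\ell^2(X)$ (\cite[Proposition~4]{D}), so that every ultrapower $\ell^2(X)_\mathcal{U}$ is reflexive and hence each of its bounded, closed, convex subsets is weakly compact; one would then want to exhibit ${\rm Ball}(B(X))(x_\mathcal{U})$ as the image of the $wo$-compact set ${\rm Ball}(B(X))$ (Proposition \ref{wo}(1)) under the evaluation $T\mapsto T(x_\mathcal{U})$, and deduce weak compactness from continuity. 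The trouble is that this evaluation need not be $wo$-to-weak continuous: pairing $T(x_\mathcal{U})$ with $g=J((g^\alpha)_\mathcal{U})\in(\ell^2(X)_\mathcal{U})^*$, where $g^\alpha=(g^\alpha_n)_n\in(\ell^2(X))^*=\ell^2(X^*)$ and $x_\mathcal{U}=(x^\alpha)_\mathcal{U}$ with $x^\alpha=(x^\alpha_n)_n$, yields $\langle g,T(x_\mathcal{U})\rangle=\lim_\mathcal{U}\sum_{n=1}^\infty\langle g^\alpha_n,Tx^\alpha_n\rangle$; for each fixed $\alpha$ the inner series is $wo$-continuous in $T$ (by dominated convergence for series, using $\sum_n\|g^\alpha_n\|\,\|x^\alpha_n\|\le\|g^\alpha\|_2\,\|x^\alpha\|_2<\infty$), but one cannot in general interchange the ultralimit over $\alpha$ with the net limit defining $wo$-convergence. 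Theorem \ref{main} sidesteps this because it routes the argument through the factorization criterion of Proposition \ref{arens} and the weak compactness of the maps $T\mapsto(x\otimes f_\mathcal{U})\cdot T$, which is exactly the ingredient needed to tame such iterated limits.
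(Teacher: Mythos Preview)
Your proposal is correct and matches the paper's own argument exactly: the paper derives the corollary by citing Daws's theorem that super-reflexivity of $X$ implies Arens regularity of $B(X)$, and then invoking Theorem~\ref{main} to conclude ultra-reflexivity. Your additional paragraph explaining why a direct argument is not straightforward is extra commentary not present in the paper, but it is accurate and does not affect the validity of the proof.
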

To the best of our knowledge, we do not know an ultra-reflexive space which is not super-reflexive!
It would be much desirable if one can provide  such an example. An example of a reflexive space which is not ultra-reflexive can be found in \cite[Corollary 2]{D}.
\section{On Daws's Conjecture}\label{6}
Daws  showed that, if $X$ is super-reflexive then  $B(X)$ is Arens regular; \cite[Theorem 1]{D}.
  He also conjectured  for the accuracy of the converse. 
  The following incomplete idea may lead the reader to provide a decision for the converse.

   Let $B(X)$ be Arens regular  and let  $\mathcal{U}$ be an arbitrary ultrafilter. For $x_\mathcal{U}\in X_\mathcal{U}$ choose $f_{x_\mathcal{U}}\in {X_\mathcal{U}}^*$ so that $\|f_{x_\mathcal{U}}\|=1$ and $f_{x_\mathcal{U}}(x_\mathcal{U})=\|x_\mathcal{U}\|.$ It induces a functional $\lambda_{x_\mathcal{U}}: B(X)\rightarrow\mathbb C$ defined by $\langle\lambda_{x_\mathcal{U}}, T\rangle=\langle f_\mathcal{U}, T(x_\mathcal{U})\rangle.$ Then, by Proposition \ref{arens},   there exist a reflexive space $Z_{x_\mathcal{U}}$ and the operators $\phi_{x_\mathcal{U}} :B(X)\to Z_{x_\mathcal{U}}$ and $\psi_{x_\mathcal{U}} :B(X)\to {Z_{x_\mathcal{U}}}^*$ such that $\|\phi_{x_\mathcal{U}} \|\leq\| \lambda_{x_\mathcal{U}}\|$ and $\langle{\lambda_{x_\mathcal{U}}}, ST\rangle= \langle \psi_{x_\mathcal{U}} (S),\phi_{x_\mathcal{U}}(T)\rangle$, for all $S,T\in B(X).$ Set $Z={\bigoplus}^{\ell^2}_{x_\mathcal{U}\in X_\mathcal{U}} Z_{x_\mathcal{U}};$ then trivially $Z$ is reflexive. If one  could establish an (isometric) embedding from $ X_\mathcal{U}$ into  $Z$, then the reflexivity of $Z$ implies that $X$ is super-reflexive.  We, however, do not know such  an embedding!  
   
To provide such an embedding,    one may consider the map  $\theta : X_\mathcal{U}\to Z$ which is defined by the rule $\theta (x_\mathcal{U})= \phi_{x_\mathcal{U}}(I)$. Then $\theta$ preserve the norm. Indeed,
 \[
\| \theta (x_\mathcal{U})\|=\|  \phi_{x_\mathcal{U}}(I) \|\leq\|\phi_{x_\mathcal{U}} \|\leq\| \lambda_{x_\mathcal{U}}\|\leq \| x_\mathcal{U}\|\quad {\rm and}
\]
\[\| \theta (x_\mathcal{U})\|=\|  \phi_{x_\mathcal{U}}(I) \|=\sup_{\| z^* \|\leq 1, z^*\in Z^*} |\langle z^*, \phi_{x_\mathcal{U}}(I) \rangle |\geq  |\langle \psi_{x_\mathcal{U}}(I), \phi_{x_\mathcal{U}}(I) \rangle |=|\langle{\lambda_{x_\mathcal{U}}}, I\rangle|=\|x_\mathcal{U}\|.
\]
However,   we  know noting about the linearity of $\theta$.
\section*{acknowledgment}
 We would like to thank Professor Matthew Daws for his very valuable comments  which improved an earlier version of this paper.

\end{document}